\patchcmd\Gread@eps{\@inputcheck#1 }{\@inputcheck"#1"\relax}{}{}
\newtheorem{theorem}{Theorem}[section]
\newtheorem{proposition}[theorem]{Proposition}
\newtheorem{corollary}[theorem]{Corollary}
\newtheorem{lemma}[theorem]{Lemma}
\newtheorem{definition}[theorem]{Definition}
\newcommand{\qed}{\hfill $\square$\medskip}
\begin{document}

\title{Counting the Number of Domatic Partition of a Graph}

\author{ 
	Saeid Alikhani$^{1,}$\footnote{Corresponding author}\and Davood Bakhshesh$^2$ \and Nima Ghanbari$^1$ 
}

\date{\today}

\maketitle

\begin{center}
	
	$^1$Department of Mathematical Sciences, Yazd University, 89195-741, Yazd, Iran\\	
	
	$^2$Department of Computer Science, University of Bojnord, Bojnord, Iran
	
	\medskip
	\medskip
	{\tt  ~~
		alikhani@yazd.ac.ir, d.bakhshesh@ub.ac.ir, n.ghanbari.math@gmail.com
	}
	
\end{center}

\begin{abstract}
A subset of vertices $S$ of a graph $G$ is a dominating set if every vertex in $V \setminus S$
has at least one neighbor in $S$. A domatic partition is a partition of the vertices of a graph $G$ into disjoint dominating sets. The domatic number $d(G)$ is the maximum size of a domatic partition. Suppose that  $dp(G,i)$  is the number of distinct domatic partition of $G$ with cardinality $i$. 
In this paper, we consider  the generating function of $dp(G,i)$, i.e., $DP(G,x)=\sum_{i=1}^{d(G)}dp(G,i)x^i$ which we call it  the domatic partition  polynomial.  We explore the domatic polynomial for trees, providing a quadratic time algorithm for its computation based on weak 2-coloring numbers. Our results include specific findings for paths and certain graph products, demonstrating practical applications of our theoretical framework.
\end{abstract}

\noindent{\bf Keywords:} Domatic partition, dominating set, counting.  
  
\medskip
\noindent{\bf AMS Subj.\ Class.}:  05C69.

\section{Introduction}

Let $G = (V,E)$ be
a simple graph of order $n$. The open neighborhood (closed neighborhood) of a vertex
$v \in V$ is the set $N(v)$ = $\{u | uv \in E\}$, (the set
$N[v]$ = $N(v) \cup \{v\}$). The number of vertices in  $|N(v)|$
is the degree of $v$, denoted by $ deg(v)$.  A set $S\subseteq V$ is a dominating set of a graph $G$, if every vertex in $V \setminus S$
has at least one neighbor in $S$, in other words $N[S]=V$. The cardinality of a minimum  dominating set in $G$ is called the domination number of $G$ and is denoted by $\gamma(G)$. 
The various different domination concepts are
well-studied now, however new concepts are introduced frequently and the interest is growing
rapidly. We recommend two fundamental books \cite{domination,2} and some surveys \cite{6,5} about
domination in general.

A domatic partition is a partition
of the vertex set into dominating sets, in other words, a partition $\pi$ = $\{V_1, V_2, . . . , V_k \}$ of $V(G)$  
such that every set $V_{i}$ is a dominating set in $G$. 
Cockayne and Hedetniemi \cite{Cockayne} introduced the domatic number of a graph $d(G)$ as the maximum order $k$ of a vertex partition. For more details on the domatic number refer to e.g., \cite{11,12,13}.

Motivated by enumerating of  the number of dominating sets of a graph and domination polynomial (see e.g. \cite{euro}), the enumeration of  the domatic partition for certain
graphs is a natural subject.  In other words,  we explore domatic partition  from
the point of view of the counting polynomial defined in the following standard
way.

\begin{definition}\label{def}
			Let ${\cal DP}(G,i)$ be the family of
		domatic partition of a graph $G$ with cardinality $i$, and let
		$dp(G,i)=|{\cal DP}(G,i)|$. The domatic polynomial $DP(G,x)$ of $G$ is defined as
		$$DP(G,x)=\sum_{i=1}^{d(G)}dp(G,i)x^i,$$
		where $d(G)$ is the domatic number of $G$.
\end{definition}

In Section 2, we explore various properties of the domatic polynomial. Moving on to Section 3, we delve into the investigation of the domatic polynomial for trees. Following that, in Section 4, we introduce a quadratic time algorithm designed to compute the domatic polynomial of trees, accompanied by an analysis of its time complexity. Lastly, we wrap up our paper with a conclusion in the final section.

\section{Introduction to domatic polynomial}

In this section, we obtain some properties of domatic polynomial of a graph.
We need the following result:

	\begin{theorem}{\rm\cite{Cockayne}}\label{thm:domatic-min-deg}
For any graph $G$, $d(G)\leq \delta +1$, where $\delta$ is the minimum degree of $G$. 
	\end{theorem}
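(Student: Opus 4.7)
The plan is to prove this classical bound via a counting argument centered on a vertex of minimum degree. I would start by letting $v$ be any vertex with $\deg(v) = \delta$, so that the closed neighborhood satisfies $|N[v]| = \delta + 1$. The key observation is that in any domatic partition $\pi = \{V_1, V_2, \ldots, V_k\}$, every part $V_i$ must be a dominating set, and in particular every $V_i$ must dominate the vertex $v$. This means each $V_i$ contains at least one vertex of $N[v]$ (either $v$ itself or one of its neighbors).

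Next I would use disjointness of the parts. Since the sets $V_1, \ldots, V_k$ are pairwise disjoint and each $V_i \cap N[v] \neq \emptyset$, the $k$ nonempty sets $V_1 \cap N[v], \ldots, V_k \cap N[v]$ are pairwise disjoint subsets of $N[v]$. Therefore
$$k \leq |N[v]| = \delta + 1.$$
Taking the maximum over all domatic partitions gives $d(G) \leq \delta + 1$, as desired.

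There is no real obstacle here; the entire argument hinges on the one-line pigeonhole observation that the $d(G)$ dominating sets of any partition carve $N[v]$ into that many nonempty pieces. The only care needed is to treat $v$ itself correctly, which is handled by using the closed neighborhood $N[v]$ rather than the open neighborhood $N(v)$ (since $v$ can belong to one of the $V_i$ and dominate itself). This makes the proof essentially two short sentences.
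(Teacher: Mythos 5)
Your proof is correct: the pigeonhole argument on the closed neighborhood of a minimum-degree vertex is exactly the classical argument of Cockayne and Hedetniemi, and the paper itself states this theorem only as a citation without reproducing a proof. Nothing is missing; using $N[v]$ rather than $N(v)$ is indeed the one point that needs care, and you handle it.
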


Also, we need the following easy lemma. It is well known that if there is no isolated vertex in the graph, i.e. $\delta\geq 1$, then the domatic number is at least 2. For convenience, we present a proof.

	\begin{lemma}\label{lem:lowerbound}
If  $G$ is a connected graph, then  $d(G)\geq 2$.
	\end{lemma}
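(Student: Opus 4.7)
My plan is to invoke the classical observation of Ore that the complement of any minimal dominating set is again a dominating set, and then to use connectedness to ensure that this complement is nonempty so that a two-class partition into dominating sets is legitimately produced.

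Concretely, I start by choosing a minimal dominating set $D$ of $G$; such a $D$ exists because $V(G)$ itself is dominating. Connectedness (together with $|V(G)| \ge 2$, which the statement implicitly assumes, as foreshadowed by the remark about $\delta \ge 1$ preceding the lemma) guarantees that $D \ne V(G)$: indeed every vertex $v$ has at least one neighbor, so $V(G) \setminus \{v\}$ already dominates $v$, showing that $V(G)$ itself is never minimal. Thus $V(G) \setminus D$ is nonempty, and it remains to verify that it is also a dominating set.

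For this verification I appeal to the standard characterization of minimality: for each $v \in D$, the set $D \setminus \{v\}$ fails to dominate some vertex $u$. There are two possibilities. If $u = v$, then $v$ has no neighbor inside $D$; since $G$ is connected and $v$ is not isolated, $v$ must then have a neighbor in $V(G)\setminus D$. If $u \ne v$, then $u \in V(G) \setminus D$ and $v$ is the unique neighbor of $u$ in $D$, so in particular $v$ is adjacent to a vertex of $V(G)\setminus D$. In either case, every vertex of $D$ has a neighbor in $V(G)\setminus D$, so $V(G)\setminus D$ dominates $D$, and of course dominates itself trivially. Therefore $\{D, V(G)\setminus D\}$ is a domatic partition into two parts, giving $d(G)\ge 2$.

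There is no real obstacle here; the only subtle point is keeping track of why $V(G)\setminus D$ is nonempty, which is exactly where the connectedness hypothesis (rather than merely $\delta \ge 1$) is used if one wants to avoid handling isolated vertices separately. The proof is otherwise a direct application of the private-neighbor characterization of minimal dominating sets.
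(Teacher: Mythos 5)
Your proof is correct and follows essentially the same route as the paper: take a minimal (the paper uses a minimum) dominating set $D$ and partition $V(G)$ into $D$ and its complement. In fact your write-up is more complete than the paper's, which asserts that $\overline{D}$ is dominating ``by the definition of dominating sets'' without justification, whereas you supply the private-neighbor argument that actually establishes this, and you correctly flag the implicit hypothesis $|V(G)|\ge 2$ (the lemma fails for $K_1$), a point the paper also leaves unaddressed.
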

	
	\begin{proof}
Suppose that $G$ is a graph of order $n$, and $D$ is a dominating set of that with minimum size. Since $G$ is connected, so by Ore's Theorem  $\gamma(G)\leq \frac{n}{2}$. Let $D$ be a dominating set of $G$ with minimum size. By the definition of dominating sets, then  $\overline{D}$ is a dominating set of $G$ too. So 
$$P=\{D,\overline{D}\}$$
is  a domatic partition of $G$, and therefore we have the result. 
\qed
	\end{proof}

As an immediate result of definition of domatic number, $dp(G,i)=0$ for $i>d(G)$. So the following definition is equivalent to the definition of domatic polynomial of a graph by using Theorem \ref{thm:domatic-min-deg}:

	\begin{definition}\label{def:equ}
{\rm	
Let ${\cal DP}(G,i)$ be the family of
domatic partition of a graph $G$ with cardinality $i$, and let
$dp(G,i)=|{\cal DP}(G,i)|$. The domatic polynomial $DP(G,x)$ of $G$ is defined as
$$DP(G,x)=\sum_{i=1}^{\delta +1}dp(G,i)x^i,$$
where $\delta$ is the minimum degree of $G$.
}
	\end{definition}

As an immediate result of Definition \ref{def:equ}, we have the following result. 

	\begin{proposition}\label{prop:isolate}
If $G$ has isolate vertices, then	
$$DP(G,x)=x.$$
	\end{proposition}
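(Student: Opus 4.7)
The plan is to combine the minimum-degree upper bound from Theorem \ref{thm:domatic-min-deg} with the trivial observation that the one-block partition always works. First I would note that if $G$ has at least one isolated vertex, then $\delta(G)=0$, so Theorem \ref{thm:domatic-min-deg} immediately gives $d(G)\leq \delta+1 = 1$.

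Next, I would argue that $d(G)\geq 1$: the partition consisting of the single block $V(G)$ is trivially a partition of $V(G)$ into a dominating set (since $N[V(G)]=V(G)$), so $dp(G,1)\geq 1$ and hence $d(G)=1$.

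Finally, I would show that this trivial partition is the \emph{only} domatic partition, so $dp(G,1)=1$ and $dp(G,i)=0$ for $i\geq 2$. The key observation is that an isolated vertex $v$ satisfies $N[v]=\{v\}$, so $v$ itself must belong to every dominating set of $G$. In any partition into two or more blocks, the blocks are disjoint, hence $v$ can lie in at most one block, leaving the others without $v$ and therefore not dominating. This rules out any domatic partition of cardinality $\geq 2$. Substituting into Definition \ref{def:equ} yields
\[
DP(G,x) = dp(G,1)\,x = x.
\]

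There is no real obstacle here; the argument is essentially two lines once one notices that an isolated vertex forces membership in every dominating set. The only thing to be slightly careful about is not double-counting or missing the one trivial partition, and explicitly justifying $d(G)\geq 1$ so that the upper bound $d(G)\leq 1$ can be tightened to equality.
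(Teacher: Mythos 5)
Your argument is correct and matches the paper's intent: the paper treats this as immediate from Definition \ref{def:equ}, since an isolated vertex forces $\delta=0$ and hence $d(G)\le 1$ by Theorem \ref{thm:domatic-min-deg}, leaving only the trivial one-block partition. Your extra observation that an isolated vertex must lie in every dominating set is a nice explicit justification of why no partition of cardinality $\ge 2$ can exist, but it is the same underlying idea.
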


In \cite{Garey}, it was shown that finding domatic number of a graph is NP-complete. Consequently, we have the following result.

	\begin{theorem}
Computation of the domatic polynomial of a graph is NP-complete.
	\end{theorem}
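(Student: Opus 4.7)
The plan is to give a one-line reduction from the problem of computing the domatic number, whose NP-completeness is the content of~\cite{Garey}. First, I would observe that the degree of $DP(G,x)$, viewed as a polynomial in $x$, equals $d(G)$: the sum in Definition~\ref{def} truncates at $d(G)$, and by the very definition of the domatic number at least one domatic partition of cardinality $d(G)$ exists, so $dp(G,d(G))\ge 1$ and the leading coefficient is nonzero.

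Given this observation, any hypothetical polynomial-time algorithm producing $DP(G,x)$ could be invoked as a black box to decide whether $d(G)\ge k$: compute $DP(G,x)$, read off its degree, and compare with $k$. Since the latter decision problem is NP-complete by~\cite{Garey}, computing $DP(G,x)$ is NP-hard. For NP-membership of the natural decision version ``given $G$, $i$, and $N$, is $dp(G,i)\ge N$?'', a certificate would consist of $N$ pairwise distinct partitions of $V(G)$ into $i$ parts; each part is verified in polynomial time to be a dominating set by checking that $N[S]=V(G)$, and distinctness of the listed partitions is checkable in polynomial time as well.

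The step that requires the most care — the main obstacle, insofar as there is one — is expository rather than mathematical: one must fix an unambiguous decision version of ``compute $DP(G,x)$,'' since a polynomial is not itself a yes/no object. Once the decision problem is pinned down (reading off the degree, or asking about an individual coefficient as above), both the hardness reduction and the membership check become immediate, so the bulk of the write-up will be spent carefully stating what is being claimed rather than proving it.
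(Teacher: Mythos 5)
Your proposal is correct and follows essentially the same route as the paper, which simply asserts the theorem as an immediate consequence of the NP-completeness of computing the domatic number \cite{Garey}; the reduction you spell out (the degree of $DP(G,x)$ equals $d(G)$, so a polynomial-time computation of $DP(G,x)$ would decide $d(G)\ge k$) is exactly the argument the paper leaves implicit. Your added care in fixing a decision version and checking NP-membership is more than the paper itself provides, but it is the natural way to make the one-line claim rigorous.
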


A weak $k$-coloring of a graph $G=(V,E)$ assigns a color $c(v)\in \{1,2,\ldots,k\}$ to each vertex $v\in V$, such that each non-isolated vertex is adjacent to at least one vertex with different color. So a weak $2$-coloring of a graph is equivalent to finding a domatic partition of a graph of size $2$. In the following, let ${\cal W}(G,2)$ be the family of
weak $2$-coloring of a graph $G$, and let
$w_2(G)=|{\cal W}(G,2)|$. So $dp(G,2)=w_2(G)$.  

We conclude this section, with the following results which are  immediately obtained by the Definition \ref{def}. 
\begin{proposition}\label{prop:properties}
	Let $G$ be a graph, and $d(G)=r$. The following holds:	
	\begin{itemize}
		\item[(i)]
		$dp(G,1)=1.$
		\item[(ii)]
		$dp(G,2)$ is the number of weak 2-coloring of $G$.
		\item[(iii)]
		$DP(G,1)$ is the number of all domatic partition of $G$.
		\item[(iv)]
		$\frac{\mathrm{d^{r}}}{\mathrm{d}x^{\mathrm{r}}}DP(G,x)=r!dp(G,r).$
		\item[(v)]
		$\frac{\mathrm{d^{s}}}{\mathrm{d}x^{\mathrm{s}}}DP(G,x)=0,$ for $s>r$.
		\item[(vi)]
		Zero is a root of $DP(G,x)$, with multiplicity one.
	\end{itemize}
\end{proposition}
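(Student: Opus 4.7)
The plan is to verify each of the six items directly from Definition \ref{def} (or equivalently Definition \ref{def:equ}), since the statement is essentially an unpacking of what the polynomial encodes. I would present them in the order given.

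For (i), I would observe that the only partition of $V(G)$ of cardinality $1$ is $\{V(G)\}$ itself, and $V(G)$ is trivially a dominating set, so $dp(G,1)=1$. Item (ii) is the content of the remark immediately preceding the proposition: a weak $2$-coloring of $G$ assigns each vertex one of two colors so that each non-isolated vertex has a neighbor of the other color, which is exactly the condition that both color classes be dominating sets; hence the map sending a weak $2$-coloring to its ordered color-class partition is a bijection with ${\cal DP}(G,2)$, giving $dp(G,2)=w_2(G)$. Item (iii) is the observation that evaluating $DP(G,x)$ at $x=1$ sums $dp(G,i)$ over all $i$, which by definition counts all domatic partitions regardless of cardinality.

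For (iv) and (v), I would use that $DP(G,x)$ is a polynomial of degree exactly $r=d(G)$, since $dp(G,r)\geq 1$ (the domatic number is achieved by at least one partition) and $dp(G,i)=0$ for $i>r$. Differentiating $r$ times kills all lower-degree monomials and leaves $r!\,dp(G,r)$; any further differentiation yields $0$, establishing (iv) and (v). For (vi), the polynomial has no constant term because the sum starts at $i=1$, so $x$ divides $DP(G,x)$; on the other hand, the coefficient of $x^1$ equals $dp(G,1)=1\neq 0$ by (i), so $x^2$ does not divide $DP(G,x)$. Therefore $0$ is a root of multiplicity exactly one.

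There is no real obstacle here; the only place requiring a substantive argument is item (ii), and that bijection has effectively been set up in the paragraph preceding the proposition. The remaining parts are formal consequences of the polynomial definition and the fact that $d(G)$ is the degree of $DP(G,x)$.
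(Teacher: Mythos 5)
Your proof is correct and matches the paper's treatment: the paper offers no written proof, asserting that all six items are "immediately obtained by the Definition," and your verifications are exactly the routine unpackings intended (degree of the polynomial is $d(G)$, no constant term, linear coefficient $dp(G,1)=1$, evaluation at $x=1$). The only point worth flagging is that in (ii) a weak $2$-coloring determines an \emph{ordered} pair of color classes, so strictly one should count colorings up to swapping the two colors to match the unordered family ${\cal DP}(G,2)$ --- a convention the paper itself adopts implicitly (and uses explicitly later when it divides by $2$ in the $G\circ \overline{K}_r$ computation).
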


\section{Domatic Polynomial of Trees}
In this section, we aim to compute the domatic polynomial for a given  tree. By Theorem~\ref{thm:domatic-min-deg} and Lemma \ref{lem:lowerbound}, if $T$ is a tree, then $d(T)= 2$. So we have the following result:

	\begin{proposition}\label{prop:tree}
		Let $T$ be a tree. Then 	$d(T)= 2$, and 
		$DP(T,x)=x+w_2(T)x^2$.
	\end{proposition}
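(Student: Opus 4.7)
The plan is to establish $d(T)=2$ by sandwiching it between matching upper and lower bounds, and then to simply unpack the definition of $DP(T,x)$ using the already-stated properties.

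First I would handle the upper bound. Any tree on at least two vertices contains a leaf, so its minimum degree is $\delta = 1$. Applying Theorem \ref{thm:domatic-min-deg} immediately gives $d(T)\le \delta+1 = 2$. For the lower bound, a tree is by definition connected, so Lemma \ref{lem:lowerbound} yields $d(T)\ge 2$. Combining the two inequalities establishes $d(T)=2$.

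Next I would plug this value into Definition \ref{def}. Since $d(T)=2$, the sum in the definition of $DP(T,x)$ collapses to
\[
DP(T,x) \;=\; dp(T,1)\,x \;+\; dp(T,2)\,x^{2}.
\]
To finish, I would invoke Proposition \ref{prop:properties}: part (i) gives $dp(T,1)=1$ (the trivial partition $\{V(T)\}$ is the unique domatic partition of size one), and part (ii) identifies $dp(T,2)$ with the number $w_2(T)$ of weak $2$-colorings of $T$. Substituting yields $DP(T,x)=x+w_2(T)x^{2}$, as claimed.

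There is really no serious obstacle here; the statement is a direct corollary of results already established in the paper. The only minor point to keep in mind is the tacit assumption that $T$ has at least two vertices, so that $\delta=1$ and Lemma \ref{lem:lowerbound} applies; the single-vertex tree is an isolated vertex and is covered separately by Proposition \ref{prop:isolate}, which is consistent with the formula via $w_2 = 0$.
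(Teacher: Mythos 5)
Your proposal is correct and matches the paper's own argument exactly: the paper derives $d(T)=2$ from Theorem \ref{thm:domatic-min-deg} (since $\delta=1$ for a tree) together with Lemma \ref{lem:lowerbound}, and then reads off the polynomial from the definition. Your extra remark about the single-vertex tree is a sensible clarification that the paper leaves implicit.
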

	Let $T$ be a tree and $u$ be a support vertex. Let $ones(u)$ be  the collection of vertices that have a degree of 1 and share an edge with $u$. Let $T_u^{-1}$ and $T_u^{-2}$ be two trees obtained from $T$ by removing the vertices $ones(u)$ and $ones(u)\cup \{u\}$, respectively.
	
	In a tree, we call a vertex $v$ as a  {\em quasi-star vertex}  of  tree  if it  satisfies the following conditions:
	\begin{itemize}
		\item $v$ is adjacent to exactly one non-leaf vertex (an internal vertex).
		\item $v$ is adjacent to at least one leaf (vertex with degree 1).
	\end{itemize} 
We call a tree  containing a quasi-star vertex by {\em star-neighbor} tree.  
 Now, we prove the following result. 
	 \begin{theorem}
	 	\label{thmnt}
	 	For any star-neighbor tree $T$ of order $n\geq 4$, if $y$ is its quasi-star vertex, then 
	 	$$w_2(T)=w_2(T_y^{-1})+w_2(T_y^{-2}),$$ 
	 	where $w_2(K_{1,r})=1$, for any $r\ge 1$.
	 	\end{theorem}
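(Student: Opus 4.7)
The plan is to partition the weak 2-colorings (unordered partitions of $V(T)$ into two dominating sets) of $T$ according to whether the quasi-star vertex $y$ and its unique non-leaf neighbor $x$ lie in the same part or in different parts, and then exhibit a bijection from each class to ${\cal W}(T_y^{-1},2)$ or ${\cal W}(T_y^{-2},2)$ respectively. The first observation is forced: in any weak 2-coloring $\{A,B\}$ with $y\in A$, we have $ones(y)\subseteq B$, since each such leaf has $y$ as its only neighbor. This automatically satisfies $y$'s own domination constraint (as $|ones(y)|\geq 1$), so the only remaining constraints involve $x$ and the vertices of $V(T)\setminus(\{y\}\cup ones(y))$, whose neighborhoods are unchanged in both $T_y^{-1}$ and $T_y^{-2}$.

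In the first case ($x$ and $y$ in different parts), deleting $ones(y)$ gives a partition of $V(T_y^{-1})$; in $T_y^{-1}$ the vertex $y$ is a leaf attached to $x$ and its constraint still holds, while all other domination constraints are unchanged. Conversely, any weak 2-coloring of $T_y^{-1}$ must put $y$ opposite to $x$ (since $y$ is a leaf of $T_y^{-1}$ with only neighbor $x$), and placing $ones(y)$ opposite to $y$ restores a coloring of this first type, giving a bijection with ${\cal W}(T_y^{-1},2)$.

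In the second case ($x$ and $y$ in the same part), deleting both $y$ and $ones(y)$ yields a partition of $V(T_y^{-2})$; the key step is showing that $x$'s constraint survives the deletion. Since $y$ shared $x$'s part, $y$ did not help dominate $x$ in $T$, so some other neighbor of $x$, which lies in $V(T_y^{-2})$, must already have been in the opposite part. This simultaneously gives non-emptiness of both parts of the reduced partition. The reverse extension (add $y$ to the part of $x$, and $ones(y)$ to the opposite part) is straightforward to verify, giving a bijection with ${\cal W}(T_y^{-2},2)$.

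Summing over the two cases produces $w_2(T)=w_2(T_y^{-1})+w_2(T_y^{-2})$. The most delicate step is the second case, where care is needed to see that removing $y$ does not destroy $x$'s domination and that neither part of the reduced partition collapses; the base value $w_2(K_{1,r})=1$ holds because the only weak 2-partition of a star places the center opposite the leaves, and this terminates the recursion once a reduction lands on a star.
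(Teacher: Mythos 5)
Your proof is correct and follows essentially the same route as the paper's: a case split on whether $y$ and its unique non-leaf neighbour receive the same colour, sending the ``different'' case to $T_y^{-1}$ and the ``same'' case to $T_y^{-2}$. You are in fact somewhat more careful than the paper, since you spell out the inverse maps (the forced placement of $ones(y)$ and of $y$) that justify the final equality, which the paper only asserts.
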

	 \begin{proof}
	 	It is clear that  for any $r\ge 1$, $w_2(K_{1,r})=1$. Suppose that $\cal{C}$ is a weak 2-coloring for tree $T$. We will show that with this coloring, if we cannot construct a weak 2-coloring for tree $T_y^{-1}$, we can certainly construct a weak 2-coloring for tree $T_y^{-2}$ using it. Also, suppose that the non-leaf vertex adjacent to vertex $y$ is vertex $w$. It is easy to see that if the colors of vertices $y$ and $w$ are different, then coloring $\cal{C}$ is indeed a coloring for tree $T_y^{-1}$. Now, assume that in coloring $\cal{C}$, the colors of the two vertices $y$ and $w$ are the same. Therefore, it is clear that since vertex $y$ is a leaf in tree $T_y^{-1}$, coloring $\cal{C}$ is not a coloring for $T_y^{-1}$, but since vertex $w$ must, by the definition of weak 2-coloring, be adjacent to a vertex with a different color, one of the vertices adjacent to $w$ must have a different color from $w$. Therefore, we conclude that coloring $\cal{C}$ is a coloring for tree $T_y^{-2}$.
	 	
	 	Based on the above discussion, we can derive the  relationship 
	 	$w_2(T)\le w_2(T_y^{-1})+w_2(T_y^{-2})$. 
	 	
	 	Now, since any coloring for tree $T_y^{-1}$ or $T_y^{-2}$ can easily be transformed into a coloring for tree $T$, the above relationship is an exact equality. Therefore, the theorem is proved.\qed
	 	\end{proof}
	 	
	 	Now,  consider  a path $P_n$. It is clear that $P_n$  is a star-neighbor tree. Let $T:=P_n$. It is clear that $T_y^{-1}$ or $T_y^{-2}$ are $P_{n-1}$ and $P_{n-2}$, respectively. Now, using Theorem \ref{thmnt}, we have the following result. 
	 		 \begin{corollary}
	 		\label{co1}
	 		For any path $P_n$ of order $n\geq 4$, 
	 		$$w_2(P_n)=w_2(P_{n-1})+w_2(P_{n-2}),$$ 
	 		where $w_2(P_3)=w_2(P_2)=1$.
	 	\end{corollary}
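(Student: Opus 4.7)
The plan is to reduce the statement to a direct application of Theorem \ref{thmnt}. I would label the vertices of $P_n$ as $v_1, v_2, \ldots, v_n$ along the path, set $T := P_n$, and take $y := v_2$. First, I would verify that $y$ is a quasi-star vertex whenever $n \geq 4$: its only two neighbors are $v_1$, which is a leaf, and $v_3$, which is internal because $v_3$ has degree $2$ in $P_n$. The hypothesis $n \geq 4$ is exactly what is needed to ensure that $v_3$ is not itself a leaf. Hence $T$ is a star-neighbor tree and Theorem \ref{thmnt} applies.

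Next, I would identify the two reduced trees. Since $ones(v_2) = \{v_1\}$, the graph $T_y^{-1}$ is the subpath induced on $\{v_2, v_3, \ldots, v_n\}$, which is isomorphic to $P_{n-1}$, while $T_y^{-2}$ is the subpath on $\{v_3, \ldots, v_n\}$, isomorphic to $P_{n-2}$. Plugging these isomorphisms into the identity of Theorem \ref{thmnt} immediately yields the recurrence $w_2(P_n) = w_2(P_{n-1}) + w_2(P_{n-2})$.

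Finally, I would settle the base cases by direct inspection. For $P_2$, the unique partition into two dominating sets is $\{\{v_1\}, \{v_2\}\}$, so $w_2(P_2) = 1$. For $P_3 = v_1 v_2 v_3$, the only such partition is $\{\{v_2\}, \{v_1, v_3\}\}$, since if $v_2$ were grouped with one of its neighbors the remaining part would be a single leaf, which fails to dominate the opposite endpoint. Thus $w_2(P_3) = 1$. There is no genuinely hard step here, because all the combinatorial work has been packaged into Theorem \ref{thmnt}; the most care-demanding point is simply the verification that $n \geq 4$ is the correct threshold for $v_2$ to qualify as a quasi-star vertex.
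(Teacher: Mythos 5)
Your proposal is correct and follows essentially the same route as the paper: identify $P_n$ as a star-neighbor tree, apply Theorem \ref{thmnt} with $T_y^{-1}\cong P_{n-1}$ and $T_y^{-2}\cong P_{n-2}$, and check the base cases. Your write-up is in fact slightly more careful than the paper's, which simply asserts these identifications without verifying the quasi-star condition or the values $w_2(P_2)=w_2(P_3)=1$.
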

	 	As an immediate result of Corollary \ref{co1}, we have:
	 	\begin{corollary}
	 		$\lim_{n \to \infty}\frac{w_2(P_{n+1})}{w_2(P_n)}=\phi,$
	 		where $\phi$ is the golden ratio.
	 	\end{corollary}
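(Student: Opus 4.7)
The plan is to deduce the limit directly from the linear recurrence in Corollary~\ref{co1} using standard techniques for second-order linear recurrences with constant coefficients. Since $w_2(P_n) = w_2(P_{n-1}) + w_2(P_{n-2})$ for $n \geq 4$ with $w_2(P_2) = w_2(P_3) = 1$, the sequence $\{w_2(P_n)\}$ is (up to reindexing) the Fibonacci sequence, so the golden ratio will appear naturally as the dominant eigenvalue.

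First I would write down the characteristic equation $x^2 = x + 1$ of the recurrence, whose roots are $\phi = \tfrac{1+\sqrt{5}}{2}$ and $\psi = \tfrac{1-\sqrt{5}}{2}$. Standard theory of linear recurrences then gives a closed form
\begin{equation*}
w_2(P_n) = A\phi^{n} + B\psi^{n}
\end{equation*}
for constants $A,B$ uniquely determined by the initial values $w_2(P_2) = w_2(P_3) = 1$. Solving the resulting $2\times 2$ linear system yields explicit nonzero values for $A$ and $B$ (in particular $A \neq 0$, which is the only fact actually needed below).

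Next I would form the ratio and factor out the dominant term:
\begin{equation*}
\frac{w_2(P_{n+1})}{w_2(P_n)} = \frac{A\phi^{n+1} + B\psi^{n+1}}{A\phi^{n} + B\psi^{n}} = \phi \cdot \frac{1 + (B/A)(\psi/\phi)^{n+1}}{1 + (B/A)(\psi/\phi)^{n}}.
\end{equation*}
Since $|\psi/\phi| < 1$, the terms $(\psi/\phi)^{n}$ and $(\psi/\phi)^{n+1}$ both tend to $0$, so both numerator and denominator of the fraction on the right tend to $1$, and the ratio tends to $\phi$.

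The only real obstacle is ensuring $A\neq 0$, which would cause the leading $\phi^n$ term to cancel. This is handled by the closed-form computation from the initial conditions; alternatively, one can avoid it entirely by setting $r_n := w_2(P_{n+1})/w_2(P_n)$, noting $r_n = 1 + 1/r_{n-1}$ from the recurrence, checking that $r_n$ is bounded and its even- and odd-indexed subsequences are monotone (hence both convergent), and then passing to the limit in $L = 1 + 1/L$ to get $L = \phi$ (the negative root being excluded since all $r_n > 0$). Either route completes the proof.
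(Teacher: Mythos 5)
Your proof is correct and follows essentially the same route as the paper, which simply observes that $w_2(P_n)$ satisfies the Fibonacci recurrence and invokes the well-known limit of ratios of consecutive Fibonacci numbers. You have merely supplied the standard details (characteristic roots, Binet-type closed form, and the check that the dominant coefficient is nonzero) that the paper leaves implicit.
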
	 	
	 	\begin{proof}
	 		Since $w_2(P_n)$ follows the Fibonacci sequence, we have the result.\qed
	 	\end{proof}

	 	Let $w_1,w_2, \ldots , w_k$ be some vertices of a graph $G$.  Let $B(G;w_1,...,w_k)$ be the bouquet of graph $G$  with respect to the vertices $\{w_i\}_{i=1}^k$ and obtained by identifying the vertex $w_i$ of the graph with vertex $w$. 
	 
	 \bigskip
	 
	 Consider a tree denoted by $T$, and select a support vertex within $T$, labeled $y$. Let $N(y) \setminus ones(y)=\{w_1, w_2, \ldots, w_k\}$. The tree obtained by bouqueting the vertices $\{w_1, w_2, \ldots, w_k\}$ is denoted as $T'$ (see Figure~\ref{fs1}).	 
	\begin{figure}[ht]
	\begin{center}
		\includegraphics[width=0.85\linewidth]{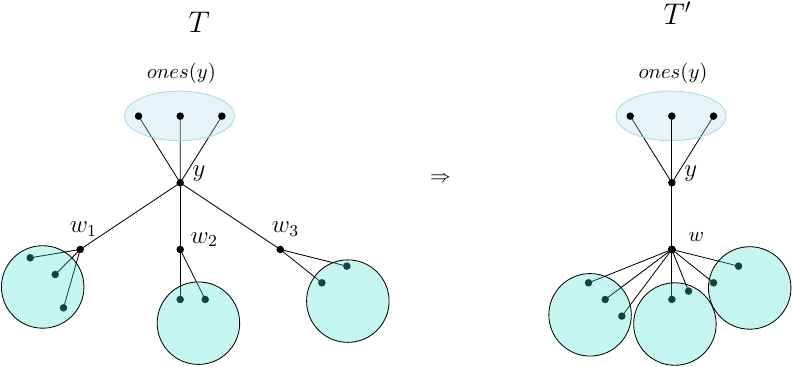}
		\caption{A tree $T$  and the tree  $T'$.}
		\label{fs1}
	\end{center}
\end{figure}
Now, we prove the following result.
\begin{theorem}
	\label{thmw}
For any tree $T$ of order $n>1$,   if $y$ is one of  its support vertices, then
$$w_2(T)=w_2(T_y^{-1})+w_2({T'}_y^{-2}).$$
\end{theorem}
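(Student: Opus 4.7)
The plan is to lift the case analysis from Theorem~\ref{thmnt} to a support vertex $y$ that may have several non-leaf neighbours. Let $\mathcal{C}$ be any weak 2-coloring of $T$. Every leaf in $ones(y)$ is forced to take the colour opposite to $c(y)$, so $y$'s own constraint is automatically met, and the coloring is then sorted by a dichotomy: either at least one of $w_1,\ldots,w_k$ differs in colour from $y$, or all of them share $c(y)$. These two alternatives are disjoint and exhaustive, so it suffices to biject them with the weak 2-colorings of $T_y^{-1}$ and of ${T'}_y^{-2}$, respectively.

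For the first alternative, I would restrict $\mathcal{C}$ to $V(T_y^{-1})$. The vertex $y$, whose only neighbours there are the $w_i$'s, is covered by the one of different colour, while every other vertex retains the neighbourhood it had in $T$ because the deleted vertices are all leaves whose unique neighbour is $y$. Conversely, any weak 2-coloring of $T_y^{-1}$ already forces some $w_i$ to disagree with $y$, and extending by painting each leaf of $ones(y)$ opposite to $y$ produces a weak 2-coloring of $T$ of the required type. This establishes a bijection and contributes the summand $w_2(T_y^{-1})$.

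For the second alternative, the motivation for the bouquet construction becomes transparent: when $w_1,\ldots,w_k$ all agree in colour with $y$, identifying them into a single vertex $w$ of $T'$ is colour-consistent. I would push $\mathcal{C}$ forward to a coloring of $T'$ by assigning $w$ the common colour of the $w_i$'s, and then observe that after removing $\{y\}\cup ones(y)$ the induced assignment is a weak 2-coloring of ${T'}_y^{-2}$: the vertex $w$ inherits the union of the non-$y$ neighbourhoods of the $w_i$'s, so a differently-coloured neighbour that some $w_i$ already had in $T$ becomes a differently-coloured neighbour of $w$ in ${T'}_y^{-2}$, and no vertex outside the identified cluster sees its neighbourhood change in an essential way. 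In the reverse direction, a weak 2-coloring of ${T'}_y^{-2}$ is lifted to $T$ by colouring each $w_i$ like $w$, setting $c(y):=c(w)$, and painting every leaf of $ones(y)$ opposite to $y$.

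The main obstacle is verifying this reverse direction: after lifting a weak 2-coloring of ${T'}_y^{-2}$ back to $T$, every individual $w_i$—not merely the amalgamated vertex $w$—must acquire a differently-coloured neighbour. I would handle this by appealing to the tree structure of $T$ and to the fact that each $w_i$ is non-leaf, so the subtree of $T$ rooted at $w_i$ sits intact inside $T'$ at $w$ and contributes its own neighbours to $w$'s neighbourhood in ${T'}_y^{-2}$; the bouquet construction then allows me to repackage the single constraint on $w$ as the required constraint on every $w_i$. Combining this with the case-1 bijection, and noting that the two families of weak 2-colorings of $T$ are disjoint and exhaustive, yields the identity $w_2(T)=w_2(T_y^{-1})+w_2({T'}_y^{-2})$.
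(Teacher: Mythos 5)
Your proposal follows the paper's own route---partition the weak $2$-colorings of $T$ according to whether some $w_i$ disagrees in colour with $y$, match the first class with the colorings of $T_y^{-1}$ and the second with those of ${T'}_y^{-2}$---and the first matching, as well as the forward map in the second case, are fine. The genuine gap is exactly the step you flag as ``the main obstacle'' and then wave away: the reverse direction for the bouquet case. A weak $2$-coloring of ${T'}_y^{-2}$ gives the amalgamated vertex $w$ \emph{one} differently-coloured neighbour, and that neighbour lies in the subtree of a \emph{single} $w_i$; after un-identifying, only that $w_i$ receives a certificate, while some other $w_j$ may see all of its neighbours in $T$ (namely $y$ and its own subtree neighbours) coloured like itself. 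The claimed ``repackaging'' of the one constraint on $w$ into a constraint on every $w_i$ is precisely what fails, and no appeal to the tree structure rescues it.

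Concretely, take $T$ with vertices $\ell,y,u,a,v,p,q$ and edges $\ell y$, $yu$, $ua$, $yv$, $vp$, $pq$, so that $y$ is a support vertex, $ones(y)=\{\ell\}$ and $N(y)\setminus ones(y)=\{u,v\}$. Then $T_y^{-1}$ is the path on $a,u,y,v,p,q$, i.e.\ $P_6$, and ${T'}_y^{-2}$ is the path on $a,w,p,q$, i.e.\ $P_4$, so the right-hand side of the identity equals $5+2=7$; a direct count gives $w_2(T)=6$. The discrepancy comes from the weak $2$-coloring of ${T'}_y^{-2}$ that puts $w,p$ in one class and $a,q$ in the other: lifting it assigns $y$, $u$, $v$ and $p$ the same colour, leaving $v$ with no differently-coloured neighbour in $T$. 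Note that the paper's own proof disposes of this direction with ``can easily be transformed'' and has the same defect; your write-up is faithful to that argument but inherits a flaw which, for $k\geq 2$, is fatal to the stated identity itself. The decomposition is sound only when $y$ has a unique non-leaf neighbour, which is the situation of Theorem~\ref{thmnt}.
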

\begin{proof}
	 Suppose that $\cal{C}$ is a weak 2-coloring for tree $T$. We will show that with this coloring, if we cannot construct a weak 2-coloring for tree $T_y^{-1}$, we can certainly construct a weak 2-coloring for tree ${T'}_y^{-2}$ using it. Also, let $N(y) \setminus ones(y)=\{w_1, w_2, \ldots, w_k\}$. It is easy to see that if the colors of vertices $y$ and one of the vertices $\{w_1, w_2, \ldots, w_k\}$ are different, then coloring $\cal{C}$ is indeed a coloring for tree $T_y^{-1}$. Now, assume that in coloring $\cal{C}$, the colors of the vertices $y$ and all vertices $\{w_1, w_2, \ldots, w_k\}$ are the same. Now, by bouqueting the vertices  $\{w_1, w_2, \ldots, w_k\}$ into  a vertex $w$, we obtain the tree $T'$. We color the vertex $w$ by the color assigned to the vertices $\{w_1, w_2, \ldots, w_k\}$.    Since  $w$ have the same color with $y$,  coloring $\cal{C}$ is not a coloring for ${T'}_y^{-1}$, but since the vertex $w$ in $T'$ must, by the definition of weak 2-coloring, be adjacent to a vertex with a different color, one of the vertices adjacent to $w$ must have a different color from $w$. Therefore, we conclude that coloring $\cal{C}$ is a coloring for tree ${T'}_y^{-2}$.
	
	Based on the above discussion, we can derive the  relationship 
$w_2(T)=w_2(T_y^{-1})+w_2({T'}_y^{-2})$. 
	
	Now, since any coloring for tree $T_y^{-1}$ or ${T'}_y^{-2}$ can easily be transformed into a coloring for tree $T$, the above relationship is an exact equality. Therefore, the theorem is proved.
	\qed
\end{proof}

As an another conclusion of Theorem \ref{thmw}, we have the following result. 
\begin{proposition}\label{prop:pnok1}
	If  $P_n$ is a  path  of order $n$, then $$DP(P_n\circ K_1,x)=x+2^{n-1}x^2.$$	
\end{proposition}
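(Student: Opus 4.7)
The plan is to combine the standard reduction for graphs of minimum degree one with a direct bijective count. Since $P_n \circ K_1$ is connected and has minimum degree $\delta = 1$ (each pendant leaf has degree one), Lemma \ref{lem:lowerbound} and Theorem \ref{thm:domatic-min-deg} force $d(P_n \circ K_1) = 2$. By Definition \ref{def:equ} this immediately gives
\[
DP(P_n \circ K_1, x) = x + w_2(P_n \circ K_1)\, x^2,
\]
so everything reduces to showing $w_2(P_n \circ K_1) = 2^{n-1}$.

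Label the spine by $v_1, \ldots, v_n$ and write $u_i$ for the pendant leaf attached to $v_i$. For any domatic partition $\{A,B\}$ of size two I would run a forcing argument on the leaves: the vertex $u_i$ has only the neighbor $v_i$, so if $u_i \in B$ then $A$ can dominate $u_i$ only through $v_i$, forcing $v_i \in A$; by symmetry, $u_i$ and $v_i$ must always lie in opposite parts. Conversely, if this parity rule holds, each spine vertex $v_i$ is dominated from its pendant $u_i$ (which is in the opposite part) and each leaf $u_i$ is dominated from $v_i$, so both $A$ and $B$ are dominating. Hence valid size-two domatic partitions correspond bijectively to assignments of $v_1, \ldots, v_n$ to two classes, modulo global swap of the two classes.

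Counting then gives $2^n / 2 = 2^{n-1}$ partitions: there are $2^n$ such labelings, and the swap $A \leftrightarrow B$ has no fixed points because $A$ and $B$ partition the nonempty vertex set and so cannot be equal. This yields $DP(P_n \circ K_1, x) = x + 2^{n-1}\, x^2$, as claimed. There is no real obstacle in the argument; the only step requiring care is verifying that the pendant-opposition rule automatically satisfies the domination condition at every spine vertex, so that one does not need to impose any further constraint on the coloring of the spine.
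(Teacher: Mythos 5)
Your proof is correct, but it takes a different route from the paper's proof of this proposition. The paper deduces the formula from the recursion of Theorem~\ref{thmw}, claiming $w_2(P_n\circ K_1)=2\,w_2(P_{n-1}\circ K_1)$ with base case $w_2(P_2\circ K_1)=2$, and then unwinding the recurrence. You instead give a direct forcing/bijection argument: each pendant $u_i$ has the unique neighbor $v_i$, so in any size-two domatic partition the pair $\{u_i,v_i\}$ must be split between the two classes, and conversely any such splitting yields two dominating sets; dividing the $2^n$ ordered assignments by the fixed-point-free swap gives $2^{n-1}$. Your argument is self-contained, avoids having to verify that the reduction of Theorem~\ref{thmw} really produces $P_{n-1}\circ K_1$ (a step the paper leaves implicit and which requires a small extra observation about the leaf created at $v_n$ after deleting $u_n$), and in fact it is essentially the argument the paper uses later for the more general statement about $G\circ \overline{K}_r$; indeed your proof works verbatim for any connected graph $G$ in place of $P_n$, since you never use the path structure of the spine. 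What the paper's approach buys is an illustration of the recursive machinery of Theorem~\ref{thmw} that underlies its algorithm; what yours buys is a cleaner, more general, and fully rigorous count.
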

\begin{proof}
	It is not hard to see that by Theorem \ref{thmw}, $w_2(P_n\circ K_1)=2w_2(P_{n-1}\circ K_1)$. Since $w_2(P_{2}\circ K_1)=2$, we easily conclude that $w_2(P_n\circ K_1)=2^{n-1}$. Hence, $DP(P_n\circ K_1,x)=x+2^{n-1}x^2.$
\end{proof}

Finally, we extend the previous result as follows. 
\begin{theorem}
	Let $G$ be a graph of order $n$, and let $r>0$ be an integer. Then 
	$$DP(G\circ \overline{K}_r,x)=x+2^{n-1}x^2.$$
\end{theorem}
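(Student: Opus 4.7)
The plan is to reduce to counting weak $2$-colorings of $H:=G\circ\overline{K}_r$ and then to exploit the fact that the $r\ge 1$ pendants attached to each vertex of $G$ essentially decouple the problem. As a first step, I would verify that $d(H)=2$: every pendant leaf has degree $1$, so Theorem \ref{thm:domatic-min-deg} forces $d(H)\le\delta(H)+1=2$, while $H$ has no isolated vertex (each vertex of $G$ has $r\ge 1$ pendant neighbors and each pendant has its support), so the argument of Lemma \ref{lem:lowerbound} gives $d(H)\ge 2$. Hence, just as in Proposition \ref{prop:tree},
$$DP(H,x)=x+w_2(H)\,x^2,$$
and only the identity $w_2(H)=2^{n-1}$ remains to be shown.

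Next I would analyze the weak $2$-colorings of $H$. A pendant leaf $u$ attached to a support vertex $v\in V(G)$ has $v$ as its unique neighbor, so the weak $2$-coloring condition at $u$ forces the color of $u$ to differ from that of $v$. Thus every weak $2$-coloring of $H$ is \emph{determined} by its restriction $c_0:V(G)\to\{1,2\}$. Conversely, any such $c_0$ extends to a valid weak $2$-coloring by painting each leaf with the opposite color of its support: the condition at each leaf is then built in, and the condition at each $v\in V(G)$ is fulfilled by any of its (at least one) leaf neighbors of opposite color. This sets up a bijection between \emph{ordered} weak $2$-colorings of $H$ (i.e. functions $V(H)\to\{1,2\}$ satisfying the condition) and arbitrary functions $V(G)\to\{1,2\}$, yielding $2^n$ such ordered colorings.

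Finally, I would pass to the unordered count. By Proposition \ref{prop:properties}(ii), $w_2(H)$ counts unordered partitions $\{V_1,V_2\}$ into two dominating sets, equivalently weak $2$-colorings modulo the color-swap involution. Because any support vertex and its leaves always lie in opposite classes, both color classes are nonempty, so the swap is fixed-point free; dividing by $2$ gives $w_2(H)=2^n/2=2^{n-1}$, and therefore $DP(G\circ\overline{K}_r,x)=x+2^{n-1}x^2$.

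I expect the only delicate point to be the final division by two — checking that no ordered coloring is stabilized by the color swap — and this is handled cleanly by the leaf-versus-support dichotomy established in the middle step.
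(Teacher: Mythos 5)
Your proof is correct and follows essentially the same route as the paper: leaf colors are forced to be opposite to their supports, so ordered weak $2$-colorings biject with the $2^n$ functions $V(G)\to\{1,2\}$, and the fixed-point-free color swap halves this to $2^{n-1}$. You are in fact somewhat more careful than the paper, since you explicitly check $d(G\circ\overline{K}_r)=2$ and that the swap has no fixed points before dividing by two.
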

\begin{proof}
	Assume we have two colors $c_1$ and $c_2$ for coloring of $G$. As we know, each vertex of the graph $G$ is adjacent to at least $r$ vertices of degree one. Therefore, for a weak 2-coloring  of $G$, any color assigned to vertex $x$ must be different from the colors assigned to all its adjacent vertices of degree one. Hence, the method of coloring is completely determined. Now, since each vertex in $G$ has two coloring options, the total number of ways to color (weak 2-coloring) the graph is $2^n$. However, by swapping color $c_1$ with color $c_2$, we obtain another coloring, but in terms of domatic partitioning, it is no different from the coloring before the swap. Therefore, the total number of distinct weak 2-colorings of the graph is $2^{n-1}$. Hence, $w_2(G\circ \overline{K}_)=2^{n-1}$ that completes the proof.\qed
\end{proof}
\section{Algorithm and analysis}
Based on Theorem \ref{thmw}, we provide the following algorithm that computes the $w_2(T)$ for a tree $T$.

	\begin{algorithm}[H]
		\SetAlgoLined
		\KwIn{Tree \( T \)}
		\KwOut{\( w_2(T) \)}
		
		\SetKwFunction{ComputeWTwo}{ComputeW2}
		\SetKwProg{Fn}{Function}{:}{}
		
		\Fn{\ComputeWTwo{$T$}}{
			\If{\( T \) has only one vertex}{
				\Return 0\;
			}
			Identify a support vertex \( y \) in \( T \)\;
			Compute \( ones(y) \) (vertices of degree 1 adjacent to \( y \))\;
			Compute \( T_y^{-1} \) by removing vertices in \( \text{ones}(y) \)\;
			Compute \( {T'} \) by bouqueting the vertices  \( \{w_1, w_2, \ldots, w_k\} \) in \( T \)\;
			Compute \( {T'}_y^{-2} \) by further removing vertex \( y \) from \( {T'}_y^{-1} \)\;
		}
		
		\BlankLine
		 \Return \ComputeWTwo{$T_y^{-1}$} + \ComputeWTwo{${T'}_y^{-2}$}\;
		\caption{Algorithm to compute \( w_2(T) \) in a tree \( T \)}
	\end{algorithm}

Now, we analyze  the time complexity of the algorithm.
	
	To analyze the time complexity of the algorithm for computing $w_2(T)$ for a tree $T$ of order $n$, we consider the following steps:
	\begin{itemize}
		\item \textbf{Step 1: Identifying a support vertex \( y \)}:
		\begin{itemize}
			\item Traversing the tree and checking the degrees of vertices takes \( O(n) \) time.
		\end{itemize}
		
		\item \textbf{Step 2: Computing \({ones}(y) \)}:
		\begin{itemize}
			\item Identifying the vertices of degree 1 adjacent to \( y \) can be done in \( O(d_y) \) time, where \( d_y \) is the degree of \( y \).
			\item Since \( d_y \) can be at most \( n-1 \), this step is bounded by \( O(n) \).
		\end{itemize}
		
		\item \textbf{Step 3: Computing \( T_y^{-1} \)}:
		\begin{itemize}
			\item Removing the vertices in \( {ones}(y) \) involves visiting each of these vertices and removing them, which can be done in \( O(d_y) \) time.
			\item This step is bounded by \( O(n) \) since \( d_y \leq n-1 \).
		\end{itemize}
		
		\item \textbf{Step 4: Computing \( {T'} \)}:
		\begin{itemize}
			\item Contracting vertices \( \{w_1, w_2, \ldots, w_k\} \) into a single vertex involves merging these vertices and updating the edges.
			\item This can be done in \( O(n) \) time.
		\end{itemize}
		
		\item \textbf{Step 5: Computing \( {T'}_y^{-2} \)}:
		\begin{itemize}
			\item Removing vertex \( y \) from \( {T'} \) can be done in \( O(1) \) time.
		\end{itemize}
		
		\item \textbf{Step 6: Recursive computation of \( w_2 \)}:
		\begin{itemize}
			\item The function \( w_2 \) is computed recursively on \( T_y^{-1} \) and \( {T'}_y^{-2} \).
			\item Let \( n_1 \) and \( n_2 \) be the sizes of the smaller subtrees formed after removing vertices from \( T \). The combined size is \( n_1 + n_2 \leq n \).
		\end{itemize}
	\end{itemize}
	
	The recurrence relation for the time complexity \( T(n) \) is:
	\[
	T(n) = T(n_1) + T(n_2) + O(n).
	\]
	
	Using the Master Theorem for divide-and-conquer recurrences, we analyze this recurrence:
	
	\begin{itemize}
		\item \textbf{Case 1: Balanced subproblems}
		\[
		T(n) = 2T\left(\frac{n}{2}\right) + O(n).
		\]
		According to the Master Theorem, this recurrence has a solution of \( T(n) = O(n \log n) \).
		
		\item \textbf{Case 2: Highly unbalanced subproblems}
		\[
		T(n) = T(n-1) + O(n).
		\]
		This recurrence results in a time complexity of \( T(n) = O(n^2) \).
	\end{itemize}

	Now, we finalize this section. 
	\begin{theorem}
		The worst-case  time complexity of the algorithm for computing $w_2(T)$ for a tree $T$ of order $n$ is $O(n^2)$. 
	\end{theorem}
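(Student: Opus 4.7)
The plan is to verify the per-invocation cost, write down the recurrence governing the total running time, and then bound it in the worst case. The backbone of the argument is already laid out in the itemized step analysis immediately preceding the theorem statement; what remains is to tie it together cleanly.

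First I would confirm that a single call to ComputeW2, excluding its two recursive calls, runs in $O(n)$ time. The work consists of: finding a support vertex $y$, collecting $ones(y)$, constructing $T_y^{-1}$, identifying and bouqueting the non-leaf neighbors $\{w_1,\ldots,w_k\}$ of $y$ to form $T'$, and deleting $y$ from $T'$ to obtain ${T'}_y^{-2}$. Each task either scans the vertex set once, scans the neighborhood of $y$ once, or performs a constant-time structural edit, so the combined non-recursive cost is at most $c n$ for some absolute constant $c$.

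Next I would set up the recurrence $T(n) \le T(n_1) + T(n_2) + c n$, where $n_1 = |V(T_y^{-1})|$ and $n_2 = |V({T'}_y^{-2})|$. Since $T_y^{-1}$ removes at least one leaf and ${T'}_y^{-2}$ additionally removes $y$ and contracts $\{w_1,\ldots,w_k\}$, both $n_1$ and $n_2$ are strictly smaller than $n$; together with the trivial base case on a single-vertex tree (which returns $0$ in constant time), this guarantees termination. Under the sizing convention $n_1 + n_2 \le n$ adopted in the step-by-step analysis, the recurrence falls within the scope of the Master Theorem.

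The main step, and the one I expect to require the most care, is the worst-case analysis of this recurrence. The balanced split $n_1 = n_2 = n/2$ yields $T(n) = 2T(n/2) + O(n) = O(n\log n)$ by the Master Theorem, as already observed. The adversarial choice is the maximally unbalanced split, which reduces the recurrence to $T(n) \le T(n-1) + O(n)$; this unrolls to $O(n^2)$, or equivalently a short induction with hypothesis $T(m) \le C m^2$ gives $T(n) \le C(n-1)^2 + cn \le Cn^2$ once $C \ge c$. Since the recursion tree has depth at most $n$ and each level contributes $O(n)$ non-recursive work, no instance can exceed the unbalanced case, so the overall worst-case time is $O(n^2)$, completing the proof.
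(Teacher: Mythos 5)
Your proposal follows essentially the same route as the paper: you verify that the non-recursive work of one call is $O(n)$, set up the recurrence $T(n)=T(n_1)+T(n_2)+O(n)$ with $n_1+n_2\le n$, and extract the $O(n^2)$ bound from the maximally unbalanced split $T(n)=T(n-1)+O(n)$, exactly as in the step-by-step analysis preceding the theorem. Your closing induction and the level-sum observation (depth at most $n$, total work $O(n)$ per level) supply a justification that the unbalanced case really is the worst, which the paper asserts only implicitly, but this is a refinement of the same argument rather than a different one.
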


\section{Conclusion}

In this paper, we have introduced and analyzed the domatic partition polynomial of a graph, $DP(G,x)$, which enumerates the domatic partitions of $G$ by their cardinality. We have established several properties of this polynomial, including its relationship with the minimum degree of the graph and its computational complexity. Specifically, we demonstrated that computing the domatic polynomial is NP-complete.

We also focused on trees and provided a detailed examination of their domatic polynomials. We derived a quadratic time algorithm for computing the domatic polynomial of a tree by leveraging the weak 2-coloring number, $w_2(T)$. This algorithm capitalizes on the hierarchical structure of trees, recursively breaking down the problem into smaller subtrees. 

Furthermore, we provided specific results for paths and certain graph products, showcasing the practical applications of our theoretical findings. For paths $P_n$, we determined that $DP(P_n, x) = x + 2^{\lfloor\frac{n}{2}\rfloor-1}x^2$. Additionally, we extended our results to graphs of the form $G \circ \overline{K}_r$, demonstrating that $DP(G \circ \overline{K}_r, x) = x + 2^{n-1}x^2$ for a graph $G$ of order $n$.

Overall, our work provides a comprehensive framework for understanding and computing the domatic partition polynomial, opening new avenues for future research in graph theory and combinatorial optimization.

\end{document}